\newcommand{\Addresses}{{
		\bigskip
		\footnotesize
		
		\textsc{Department of Mathematics, Technion - Israel Institute of Technology, Haifa, Israel}\par\nopagebreak
		\textit{E-mail address:} \texttt{ofir.gor@technion.ac.il}

        \medskip

        \textsc{Department of Mathematical Sciences, Durham University, Stockton Road, Durham DH1 3LE}\par\nopagebreak
		\textit{E-mail address:} \texttt{mo-dick.wong@durham.ac.uk}
}}
\newcommand{\subjclass}[2][2020]{%
  \let\@oldtitle\@title%
  \gdef\@title{\@oldtitle\footnotetext{#1 \emph{Mathematics subject classification.} #2}}%
}
\title{A short proof of Helson's conjecture}
\subjclass{11K65 (primary), 11N37, 60G15, 60G50}
\author{Ofir Gorodetsky, Mo Dick Wong}
\date{}
\theoremstyle{plain}
\newtheorem{thm}{Theorem}[section]
\newtheorem{lem}[thm]{Lemma}
\theoremstyle{remark}
\newtheorem{rem}{Remark}[section]
\newcommand{\PP}{\mathbb{P}}
\newcommand{\RR}{\mathbb{R}}
\newcommand{\NN}{\mathbb{N}}
\newcommand{\CC}{\mathbb{C}}
\newcommand{\ZZ}{\mathbb{Z}}
\newcommand{\EE}{\mathbb{E}}
\newcommand*\diff{\mathop{}\!\mathrm{d}}
\numberwithin{equation}{section}
\begin{document}

\maketitle

\begin{abstract}
Let $\alpha \colon \NN \to S^1$ be the Steinhaus multiplicative function: a completely multiplicative function such that $(\alpha(p))_{p\text{ prime}}$ are i.i.d.~random variables uniformly distributed on the complex unit circle $S^1$. Helson conjectured that $\EE|\sum_{n\le x}\alpha(n)|=o(\sqrt{x})$ as $x \to \infty$, and this was solved in a strong form by Harper. We give a short proof of the conjecture using a result of Saksman and Webb on a random model for the zeta function. 
\end{abstract}
\section{Introduction}
Let $\alpha$ be the Steinhaus random multiplicative function, defined as follows. If $n$ is a positive integer that factorizes as $\prod_{i=1}^{k} p_i^{a_i}$ ($p_1<p_2<\ldots<p_k$ are primes) then $\alpha(n):=\prod_{i=1}^{k} \alpha(p_i)^{a_i}$, where $(\alpha(p))_{p\text{ prime}}$ are i.i.d.~random variables with the uniform distribution on $S^1$, the complex unit circle $\{z\in \CC: |z| =1\}$. It is not hard to see that for any given positive integers $n$ and $m$,
\begin{equation}\label{eq:stein}
\EE \left[\alpha(n) \overline{\alpha}(m)\right] = \begin{cases} 1& \text{if }n=m,\\ 0& \text{otherwise.}\end{cases}
\end{equation}
Let
\[ S_x := \frac{1}{\sqrt{x}} \sum_{n \le x} \alpha(n).\]
By \eqref{eq:stein}, $\EE \left[|S_x|^2 \right] \asymp 1$. In \cite{Helson}, Helson conjectured that $\lim_{x \to \infty}\EE |S_x| =0$. This conjecture was solved in a strong form by Harper \cite{Har2020}. An elegant and simplified variant of Harper's results, in a model case, was established by Soundararajan and Zaman \cite{SZ}\footnote{See \cite{GZ} for generalizations of the bounds in \cite{SZ}, and \cite[Lemma 7.5]{NPS} for a different derivation of some of the bounds in \cite{SZ}.}. In this note we give a short proof of the following result.
\begin{thm}\label{thm:upper}
Fix $\delta \in (0, 1)$. We have $\EE \left[|S_x|^{2q}\right]\ll (\log \log x)^{-q/2}$ uniformly in $x \ge 3$ and $q \in [0, 1-\delta]$.
\end{thm}
Harper's result is stronger than \Cref{thm:upper} in two ways: it is uniform in $q \in [0,1]$ (which requires modifying the upper bound in the statement), and it contains a matching lower bound. However, \Cref{thm:upper} readily implies Helson's conjecture. The proof of \Cref{thm:upper} still follows the broad strategy in \cite{Har2020}, and in fact was anticipated by Harper \cite[p.~11]{Har2020}. To prove \Cref{thm:upper} we combine two inequalities. Define the function
\[ A_y(s) := \prod_{p \le y}(1-\alpha(p)p^{-s})^{-1},\qquad \Re s >0.\]
\begin{lem}\label{lem:upper}
Fix $\delta\in (0,1)$. Uniformly for $y \in [2,\sqrt{x}]$ and $q \in [0, 1-\delta]$ we have, for some absolute $C\ge 0$ and $c>0$,
\[ \EE \left[|S_x|^{2q}\right] \ll \EE\left[\left( \frac{1}{\log y}\int_{\RR} \left|\frac{A_y(1/2+it)}{1/2+it}\right|^2 \diff{t}\right)^{q}\right]  +((\log y)^C e^{-c\log x / \log y})^q. \]
\end{lem}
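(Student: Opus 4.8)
The plan is to condition on the primes up to $y$. Factor each $n\le x$ as $n=ab$, where $a$ is the $y$-smooth part of $n$ and $b$ its $y$-rough part (every prime factor exceeding $y$); then $\alpha(n)=\alpha(a)\alpha(b)$, and writing $M_y(u):=\sum_{a\le u,\ a\ y\text{-smooth}}\alpha(a)$ we have
\[S_x=\frac{1}{\sqrt x}\sum_{b\ y\text{-rough}}\alpha(b)\,M_y(x/b).\]
Let $\mathcal{F}_y$ be the $\sigma$-algebra generated by $(\alpha(p))_{p\le y}$. The $\alpha(b)$ over $y$-rough $b$ are independent of $\mathcal{F}_y$ and orthonormal by \eqref{eq:stein}, while the $M_y(x/b)$ are $\mathcal{F}_y$-measurable, so
\[V_y:=\EE\bigl[\,|S_x|^2\mid\mathcal{F}_y\,\bigr]=\frac1x\sum_{\substack{b\ y\text{-rough}\\ b\le x}}|M_y(x/b)|^2.\]
Since $t\mapsto t^q$ is concave on $[0,\infty)$ for $q\in[0,1]$, conditional Jensen yields $\EE[\,|S_x|^{2q}\mid\mathcal{F}_y\,]\le V_y^{\,q}$, hence $\EE|S_x|^{2q}\le\EE[V_y^{\,q}]$, and the problem is reduced to bounding $V_y$.

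Next I would express the target integral in terms of $M_y$. Because $A_y(s)=\sum_{a\ y\text{-smooth}}\alpha(a)a^{-s}$ converges absolutely for $\Re s>0$, partial summation gives $A_y(s)/s=\int_1^\infty M_y(u)u^{-s-1}\,du$; equivalently, $t\mapsto A_y(1/2+it)/(1/2+it)$ is the Fourier transform of $v\mapsto\mathbf{1}_{\{v>0\}}e^{-v/2}M_y(e^v)$, which lies in $L^2(\RR)$ since $|M_y(e^v)|\le\Psi(e^v,y)$, the number of $y$-smooth integers up to $e^v$, grows only polynomially in $v$. Plancherel then gives
\[\int_{\RR}\left|\frac{A_y(1/2+it)}{1/2+it}\right|^2 dt=2\pi\int_1^\infty|M_y(u)|^2\,u^{-2}\,du,\]
so it suffices to prove, for some absolute $C\ge0$ and $c>0$,
\[V_y\ll\frac1{\log y}\int_1^\infty|M_y(u)|^2u^{-2}\,du+(\log y)^C e^{-c\log x/\log y}.\]
As the failure of the first term to dominate happens only on average, not pointwise, I expect to split $V_y$ into a piece bounded by the first term pointwise and a remainder bounded only in mean; this still suffices after raising to the power $q\le1$, using $\EE[Z^q]\le(\EE Z)^q$ and $(A+B)^q\le A^q+B^q$, the mean bound then contributing $\ll\bigl((\log y)^C e^{-c\log x/\log y}\bigr)^q$.

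The heart of the argument is then a deterministic comparison of the discrete sum defining $V_y$ with the integral. Substituting $u=x/v$, one has $\int_1^\infty|M_y(u)|^2u^{-2}\,du=\frac1x\int_1^x|M_y(x/v)|^2\,dv$, so the claim becomes
\[\sum_{\substack{b\ y\text{-rough}\\ b\le x}}|M_y(x/b)|^2\ll\frac1{\log y}\int_1^x|M_y(x/v)|^2\,dv+x(\log y)^C e^{-c\log x/\log y},\]
which is plausible because the $y$-rough integers have density $\asymp\prod_{p\le y}(1-1/p)\asymp1/\log y$ (Mertens). To prove it I would exploit that $M_y$ is a step function whose jumps are unit vectors placed at the $y$-smooth integers, so that over a short multiplicative window the total variation of $M_y$ is at most the number of $y$-smooth integers in the window, governed by $\Psi$; on windows short enough that this variation is dominated by the size of $|M_y|$ there, one may replace $|M_y(x/b)|^2$ by its average over the window and count the $y$-rough $b$ in the window via the fundamental lemma of sieve theory, losing only the factor $1/\log y$. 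The ranges where this breaks down --- small arguments $x/b$, scales on which $|M_y|$ never exceeds its ``$y$-smooth-count'' floor, and (if $y$ is a large power of $x$) the range where the sieve step is too lossy --- would be absorbed into the error term by bounding the relevant partial sums in mean through $\EE|M_y(t)|^2=\Psi(t,y)$ and invoking Rankin's bound $\Psi(x,y)\ll x(\log y)^{O(1)}e^{-c\log x/\log y}$ together with Mertens. (When $y$ is close to $\sqrt x$ the error term is already $\gg1$ and the inequality is trivial, so the window argument is needed only for $y=x^{o(1)}$.)

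The single genuinely delicate step is this last comparison: transferring $\sum_{b\ y\text{-rough}}|M_y(x/b)|^2$ to the continuous integral with the correct gain $1/\log y$ and an error of shape $(\log y)^{O(1)}e^{-c\log x/\log y}$ requires marrying a careful sieve count of $y$-rough integers with the jump structure of $M_y$ and the classical upper bounds for $\Psi(x,y)$. The conditioning on $(\alpha(p))_{p\le y}$, the two appeals to Jensen, and the Parseval identity for $A_y$ are all routine.
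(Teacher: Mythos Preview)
Your plan is the paper's proof: condition on $\mathcal{F}_y$, compute $V_y=\EE[|S_x|^2\mid\mathcal{F}_y]$ via orthogonality of the rough part, apply conditional Jensen, identify the target integral with $\int_1^\infty |M_y(u)|^2 u^{-2}\,du$ by Parseval, and split $V_y$ into a piece bounded pointwise by $(\log y)^{-1}\int |M_y(u)|^2 u^{-2}\,du$ through a sieve count of rough integers, plus a remainder bounded only in mean via $\EE|M_y(t)|^2=\Psi(t,y)$ and Rankin.

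The one place to sharpen is your ``delicate comparison''. You do not need short multiplicative windows or a variation estimate, and you should not let the good/bad dichotomy depend on the random size of $|M_y|$. The paper simply uses that $M_y(u)$ is a step function of $\lfloor u\rfloor$, so $M_y(x/b)$ is \emph{constant} on each fibre $\{b:\lfloor x/b\rfloor=r\}$. After a deterministic split at $T=x^{3/4}$, group the range $b>T$ by $r=\lfloor x/b\rfloor<x^{1/4}$ and apply Brun's upper-bound sieve to the interval $(x/(r+1),x/r]$ (of length $\gg\sqrt{x}\ge y$, so the sieve yields the full $1/\log y$); this produces $\ll(\log y)^{-1}\sum_{r<x^{1/4}}|M_y(r)|^2/r\ll(\log y)^{-1}\int_1^\infty|M_y(u)|^2u^{-2}\,du$ pointwise. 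It is the complementary range $1\le b\le T$, i.e.\ \emph{large} arguments $x/b\ge x^{1/4}$, that is absorbed in mean by Rankin --- so your list of bad ranges has the direction of the split reversed.
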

\begin{lem}\label{lem:upper2}
Fix $\delta\in (0,1)$. Uniformly for $y \ge 3$ and $q \in [0, 1-\delta]$ we have 
\[\EE\left[\left( \frac{1}{\log y}\int_{\RR} \left|\frac{A_y(1/2+it)}{1/2+it}\right|^2 \diff{t}\right)^{q}\right] \ll (\log \log y)^{-q/2}.\]
\end{lem}
Taking $\log y = \log x/(\log \log x)^2$ in \Cref{lem:upper} and \Cref{lem:upper2} gives \Cref{thm:upper}.

\Cref{lem:upper} and its proof should be viewed as simplified versions of \cite[Proposition 1]{Har2020} and its proof. Our simplification was inspired by a lemma of Najnudel, Paquette and Simm in a model case \cite[Lemma 7.5]{NPS}. The same simplification was also used by Harper in the character sum case \cite[p.~13]{harper2023typical}. 

\Cref{lem:upper2} corresponds to Key Propositions 1 and 2 in \cite{Har2020}. Unlike Harper's self-contained proof which builds on branching process techniques (such as the so-called barrier estimates) and Berestycki's thick-point approach to the construction of Gaussian multiplicative chaos (GMC) \cite{Berestycki}, we follows a philosophy similar to that of Saksman and Webb \cite{SaksmanWebb} (cf.~\cite{SW}). The relevance of \cite{SaksmanWebb} to Helson's conjecture was already hinted in \cite[p.~11]{Har2020}; here we complete the necessary arguments and explain how the main coupling result (i.e.~Gaussian approximation to the logarithm of randomised Riemann zeta function; see \Cref{thm:SW-GMC} below) helps reduce Lemma 1.3 to an analogous moment bound for critical GMC. The advantage of this alternative approach is that it allows one to circumvent various technical estimates by leveraging existing results in the literature of GMC such as moment criteria and Kahane's convexity inequality (see \Cref{lem:cGMCmoments} and its proof below), though we pay the price of losing uniformity when $q$ approaches $1$ due to an application of H\"older's inequality\footnote{Gaussian approximation is also featured in Harper's work, but in the form of Berry--Esseen theorem to provide probability estimates of the correct order for barrier events, which ultimately leads to moment estimates of the correct order. On the other hand, the Gaussian approximation in \cite{SaksmanWebb} and \cite{SW} at the level of random fields allows Saksman and Webb to establish distributional convergence of randomised Riemann zeta function (modulus-squared and renormalised) to some random measure absolutely continuous with respect to a critical GMC measure. With this extra ingredient one could improve \Cref{lem:upper2} and conclude the convergence of renormalised $q$-th moments for $q$ bounded away from $1$ (we omit the details here).}.
\section{Proof of \texorpdfstring{\Cref{lem:upper}}{Lemma \ref{lem:upper}}}
We recall two number-theoretic facts. A positive integer is called $y$-smooth (resp.~$y$-rough) if any prime dividing it is at most $y$ (resp.~strictly greater than $y$). 
Let $\Psi(x,y)$ (resp.~$\Phi(x,y)$) be the number of $y$-smooth (resp.~$y$-rough) integers in $[1,x]$. The first fact, due to Rankin \cite[Theorem 5.3.1]{Cojocaru}, is the upper bound (for $x, y \ge 2$)
\begin{equation}\label{eq:smooth}
\Psi(x,y) \ll x (\log y)^A e^{-c\log x /\log y}
\end{equation}
for some absolute $A \ge 0$ and $c>0$.
We reproduce the proof: if $\alpha >0$ then $\Psi(x,y) \le x^{\alpha}\sum_{p\mid n \implies p \le y} n^{-\alpha}=x^{\alpha}\prod_{p \le y}(1-p^{-\alpha})^{-1}$. Take $\alpha = 1-c/\log y$ and note $\sum_{p \le y} -\log(1-p^{-\alpha}) \ll \sum_{p \le y} p^{-1}\ll \log \log y$ by Mertens' theorem \cite[Theorem 2.7]{MV}. The second fact, due to Brun \cite[Theorem 6.2.5]{Cojocaru}, is the upper bound
\begin{equation}\label{eq:rough}
\Phi(x+H,y)-\Phi(x,y) \ll \frac{H}{\min\{\log y,\log H\}}
\end{equation}
for $x,y, H \ge 2$. We turn to the proof of \Cref{lem:upper}, which we establish with $C=A+1$. Given $y \ge 2$ let $\mathcal{F}_y$ be the $\sigma$-algebra generated by $\{\alpha(p): p \le y\}$. 
As long as $n,m$ are both $y$-rough, the identity
\begin{equation}\label{eq:orth}
\EE\left[ \alpha(n) \overline{\alpha}(m)\mid \mathcal{F}_y\right] = \begin{cases} 1& \text{if }n=m,\\ 0& \text{otherwise,}\end{cases}
\end{equation}
still holds despite the conditioning, using the same argument that gives \eqref{eq:stein}. 
Given $y \ge 2$ we define
\[ S_{x,y} := \frac{1}{\sqrt{x}} \sum_{\substack{n \le x\\ n \text{ is }y\text{-smooth}}} \alpha(n).\]
 Since a positive integer can be written uniquely as $m m'$ where $m$ is $y$-rough and $m'$ is $y$-smooth, we have
\begin{equation}\label{eq:Sxassum}
S_x = \sum_{\substack{1 \le m\le x\\ m \text{ is }y\text{-rough}}} \frac{\alpha(m)}{\sqrt{m}} S_{x/m,y}.
\end{equation}
From \eqref{eq:orth} and \eqref{eq:Sxassum},
\begin{equation}\label{eq:orthapp}
\EE\left[ |S_x|^2 \mid \mathcal{F}_y\right] = |S_{x,y}|^2 + \sum_{\substack{y<m\le x\\ m \text{ is }y\text{-rough}}} m^{-1} |S_{x/m,y}|^2.
\end{equation}
From \eqref{eq:stein} and \eqref{eq:smooth},
\[ \EE \left[|S_{x,y}|^{2} \right]
=  x^{-1}\Psi(x,y) \ll (\log y)^A e^{-c\frac{\log x}{\log y}}.\]
We introduce a parameter $T \in [\sqrt{xy},x]$ and write the $m$-sum in \eqref{eq:orthapp} as $T^{(1)}_{x,y}+T^{(2)}_{x,y}$ where
\begin{equation*}
T^{(1)}_{x,y} := \sum_{\substack{y<m\le T \\ m \text{ is }y\text{-rough}}}m^{-1}|S_{x/m,y}|^2,\qquad 
T^{(2)}_{x,y} := \sum_{\substack{T<m\le x\\ m \text{ is }y\text{-rough}}}m^{-1} |S_{x/m,y}|^2.
\end{equation*}
The expectation of $T^{(1)}_{x,y}$ satisfies
\begin{equation}\label{eq:Tsave} \EE \left[T^{(1)}_{x,y}\right] = x^{-1}\sum_{\substack{y<m\le T\\ m \text{ is }y\text{-rough}}} \Psi(x/m,y)\ll  (\log y)^A\sum_{y<m\le T} m^{-1} e^{-c\frac{\log(x/m)}{\log y}} 
\end{equation}
by \eqref{eq:stein} and \eqref{eq:smooth}. The last expression can be bounded and estimated by a geometric sum:
\begin{multline*}
 (\log y)^A\sum_{y<m\le T} m^{-1} e^{-c\frac{\log(x/m)}{\log y}}= (\log y)^A  e^{-c\frac{\log x}{\log y}}\sum_{y<m\le T} m^{-1} e^{c\frac{\log m}{\log y}} \\
 \ll(\log y)^A  e^{-c\frac{\log x}{\log y}}\sum_{k:\, y/e<e^k\le eT} e^{c\frac{k}{\log y}} \ll 
 (\log y)^{A+1}  e^{-c\frac{\log (x/T)}{\log y}}.
 \end{multline*}
We now treat $T^{(2)}_{x,y}$. Observe $S_{t,y}\sqrt{t}=\sum_{n \le t, \, n \text{ is }y\text{-smooth}} \alpha(n)$ is a function of $\lfloor t\rfloor$ only, that is, $S_{t,y}\sqrt{t} = S_{\lfloor t\rfloor, y}\sqrt{\lfloor t\rfloor} \asymp S_{\lfloor t\rfloor,y} \sqrt{t}$. Setting $r:=\lfloor x/m\rfloor$ we may write
\begin{equation}\label{eq:t2upp}
T^{(2)}_{x,y}\ll \sum_{1\le r <x/T}|S_{r,y}|^2 \sum_{\substack{T<m \le x \\ m \text{ is }y\text{-rough} \\ m \in (x/(r+1),x/r]}}m^{-1} \le \sum_{1\le r <x/T} |S_{r,y}|^2 \frac{\Phi(\frac{x}{r},y)-\Phi(\frac{x}{r+1},y)}{x/r} .
\end{equation} 
By the assumption $T\ge \sqrt{xy}$ and \eqref{eq:rough} we can upper bound the right-hand side of \eqref{eq:t2upp} by 
\[ T^{(2)}_{x,y} \ll \frac{1}{\log y}\sum_{1\le r <x/T}\frac{|S_{r,y}|^2}{r} \ll \frac{1}{\log y} \int_{0}^{x/T} |S_{t,y}|^2 \frac{\diff{t}}{t}.\]
Here we used that $\min\{\log y,\log H\}\asymp \log y$ for $H= x/r - x/(r+1)$, when $r<x/T$ and $T\ge \sqrt{xy}$. In summary, 
\[ \EE\left[ |S_x|^2 \mid \mathcal{F}_y\right] \ll \frac{1}{\log y} \int_{0}^{x/T} |S_{t,y}|^2 \frac{\diff{t}}{t} + X\]
where
\[  X:=|S_{x,y}|^2 + T^{(1)}_{x,y} \ge 0, \qquad \EE \left[X\right]\ll (\log y)^{A+1}e^{-c \frac{\log(x/T)}{\log y}}.\]
By H\"older's inequality and subadditivity of the function $a\mapsto a^q$,
\[\EE \left[|S_{x}|^{2q} \mid \mathcal{F}_{y}\right] \le (\EE \left[|S_{x}|^2 \mid \mathcal{F}_{y}\right])^{q} \ll  \bigg( \frac{1}{\log y} \int_{0}^{x/T} |S_{t,y}|^2 \frac{\diff{t}}{t} \bigg)^{q}  + X^{q}.\]
By the law of total expectation and another application of H\"older and subadditivity,
\begin{align*}
\EE \left[|S_x|^{2q} \right]
&\ll \EE\left[\bigg( \frac{1}{\log y} \int_{0}^{x/T} |S_{t,y}|^2 \frac{\diff{t}}{t} \bigg)^{q}\right]  +(\EE \left[X\right])^{q}\\
& \ll \EE\left[\bigg( \frac{1}{\log y} \int_{0}^{\infty} |S_{t,y}|^2 \frac{\diff{t}}{t} \bigg)^{q} \right] + \left[(\log y)^{A+1}e^{-c\frac{\log(x/T)}{\log y}}\right]^q.
\end{align*}
We take $T=x^{3/4}$ and conclude by applying Parseval's theorem in the form \cite[Equation (5.26)]{MV}
\[ 2\pi \int_{0}^{\infty} \bigg|\sum_{n \le t}f(n)\bigg|^2 \frac{\diff{t}}{t^2}=\int_{\RR} \left|\frac{F_f(1/2+it)}{1/2+it}\right|^2 \diff{t}, \]
where $f$ is any arithmetic function with Dirichlet series $F_f(s):=\sum_{n} f(n)/n^s$ whose abscissa of convergence is smaller than $1/2$; we apply it for $f(n)=\alpha(n)\mathbf{1}_{n\text{ is }y\text{-smooth}}$ and $F_f=A_y$.
\begin{rem}\label{rem:deb}
By a classical result of de Bruijn \cite[Equation (1.9)]{debruijn}, $A=0$ is admissible in \eqref{eq:smooth}. Moreover, one saves a factor of $\log y$ in \eqref{eq:Tsave} by using \eqref{eq:rough}. This shows \Cref{lem:upper} holds with $C=0$.
\end{rem}
\section{Proof of \texorpdfstring{\Cref{lem:upper2}}{Lemma \ref{lem:upper2}}}
Our approach to \Cref{lem:upper2} is based on the theory of multiplicative chaos, the connection to which becomes evident if one considers the Taylor series expansion
\begin{equation}\label{eq:def_Ay}
\begin{split}
A_y\left(\sigma+is\right) 
&= \prod_{p \le y}\left[1-\frac{\alpha(p)}{p^{\sigma + is}}\right]^{-1}
 = \exp \Bigg\{ -\sum_{p \le y} \log \left(1 - \frac{\alpha(p)}{p^{\sigma + is}}\right)\Bigg\}\\
& = \exp \Bigg\{ 
\underbrace{\left[\sum_{p \le y} \frac{\alpha(p)}{p^{\sigma + is}}\right]}_{=: \mathcal{G}_{y, 1}(s; \sigma)}
+ \frac{1}{2}\underbrace{\left[\sum_{p \le y} \left(\frac{\alpha(p)}{p^{\sigma + is}}\right)^2\right]}_{=: \mathcal{G}_{y, 2}(s; \sigma)}
+ \underbrace{\left[\sum_{p \le y} \sum_{j \ge 3} \frac{1}{j}\left(\frac{\alpha(p)}{p^{(\sigma + is)j}}\right)^j\right]}_{=: \mathcal{G}_{y, 3}(s; \sigma)}
\Bigg\}
\end{split}
\end{equation}
for $\sigma \ge \frac{1}{2}$. Since $|\mathcal{G}_{y, 3}(s)| \le \sum_{p} \sum_{j \ge 3} p^{-j/2} < \infty$  uniformly in $y \ge 3$, $\sigma \ge \tfrac{1}{2}$ and $s \in \RR$, and exponential integrability can also be established for $\mathcal{G}_{y, 2}$ (see the statement and proof of \Cref{lem:G2-expmom} below for details), we just need to understand the behaviour of $\exp\left(2 \Re G_{y, 1} (s; \sigma)\right) \diff{s}$ as $y \to \infty$. It turns out that moment estimates for such sequence of random measures were already available if $\alpha(p)$ were Gaussian, and thus our task is to translate these results back to the Steinhaus case by Gaussian approximation.

Our analysis here is closely related to the works of Saksman and Webb who showed the convergence of the sequence of measures 
\[    \frac{\sqrt{\log \log y}}{\log y} \exp \left( 2\Re \mathcal{G}_{y, 1}(s; \tfrac{1}{2})\right)\diff{s}
    \qquad \text{and} \qquad 
    \frac{\sqrt{\log \log y}}{\log y}\left|A_y(\tfrac{1}{2} + is)\right|^2 \diff{s}\]
as $y \to \infty$ (see \cite[Theorem 5]{SaksmanWebb} and \cite[Theorem 1.9]{SW}). While $y$-uniform moment estimates were not explicitly established in these works, we note that the necessary ingredients were already contained in their analysis. For concreteness, we shall recall in \Cref{subsec:MCingredient} their coupling result, and explain how that could lead to the proof of \Cref{lem:upper2}.

In the following, we shall denote $\mathcal{G}_{y, j}^\Re(s; \sigma):= \Re \mathcal{G}_{y, j}(s; \sigma)$, and suppress the dependence on $\sigma$ 
whenever there is no risk of confusion. Moreover, all Gaussian fields are assumed to have zero mean unless otherwise specified.

\subsection{Exponential moments of \texorpdfstring{$\mathcal{G}_{y, 2}$}{Gy2}} \label{subsec:expm-G2}
\begin{lem}\label{lem:G2-expmom}
We have
\[    \sup_{n \in \ZZ, y \ge 3, \sigma \ge \frac{1}{2}} \EE\left[ \sup_{s \in [n, n+1]} e^{\lambda \mathcal{G}_{y, 2}^\Re(s; \sigma)} \right] < \infty \qquad \forall \lambda \in \RR. \]
\end{lem}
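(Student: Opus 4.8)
The quantity $\mathcal{G}_{y,2}^\Re(s;\sigma) = \Re \sum_{p\le y} \alpha(p)^2 p^{-2(\sigma+is)}$ is a sum of independent mean-zero bounded random variables (each term has modulus $\le p^{-2\sigma} \le p^{-1}$), so the total variance is $\sum_{p\le y} p^{-4\sigma} \le \sum_p p^{-2} = O(1)$ uniformly in $y$ and $\sigma\ge 1/2$. The plan is therefore: (1) control the supremum over $s\in[n,n+1]$ by a discretization/chaining argument reducing it to pointwise exponential moments, and (2) bound the pointwise exponential moments using independence and the uniform variance bound.

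For step (2), fix $s$. Writing $\alpha(p)=e^{i\theta_p}$ with $\theta_p$ uniform on $[0,2\pi)$, each summand is $p^{-2\sigma}\cos(2\theta_p - 2s\log p)$, and since $2\theta_p$ is again uniform, the summand is distributed as $p^{-2\sigma}\cos(\phi_p)$ with $\phi_p$ uniform and independent across $p$. Hence $\EE e^{\lambda \mathcal{G}_{y,2}^\Re(s;\sigma)} = \prod_{p\le y} \EE \exp(\lambda p^{-2\sigma}\cos\phi_p) = \prod_{p\le y} I_0(\lambda p^{-2\sigma})$ where $I_0$ is the modified Bessel function. Since $I_0(t) \le e^{t^2/4}$ (or any crude bound $I_0(t)\le e^{Ct^2}$ for $|t|\le 1$, say), this product is at most $\exp(C\lambda^2 \sum_{p\le y} p^{-4\sigma}) \le \exp(C'\lambda^2)$, uniformly in $y$ and $\sigma\ge 1/2$. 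The same bound holds with $\lambda$ replaced by $|\lambda|$, and also for $-\mathcal{G}_{y,2}^\Re$ by symmetry.

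For step (1), I would pass from the supremum over the interval to a maximum over a fine net plus a modulus-of-continuity term. Differentiating, $\partial_s \mathcal{G}_{y,2}^\Re(s;\sigma)$ is a sum whose $p$-th term has modulus $\le 2\log p \, p^{-2\sigma} \le 2\log p\, p^{-1}$, so the derivative can be large for large $y$; a naive Lipschitz bound loses. Instead the clean route is to write $\sup_{s\in[n,n+1]} e^{\lambda \mathcal{G}_{y,2}^\Re(s;\sigma)} \le \sum_{k} e^{\lambda \mathcal{G}_{y,2}^\Re(n+k/N;\sigma)} \cdot \sup_s e^{|\lambda||\mathcal{G}_{y,2}^\Re(s)-\mathcal{G}_{y,2}^\Re(n+k/N)|}$ and, for the oscillation, use a standard Sobolev-type inequality $\sup_{[a,b]}|g|^2 \ll \int_a^b (|g|^2 + |g'|^2)$ applied to $g = \mathcal{G}_{y,2} - (\text{its value at a net point})$ on each subinterval; then take expectations, using that $\EE \int |\mathcal{G}_{y,2}|^2$ and $\EE \int |\partial_s\mathcal{G}_{y,2}|^2$ over a bounded interval are \emph{not} uniformly bounded in $y$ — so this needs care. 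The cleanest fix, which I expect to be the main technical point, is to split $\mathcal{G}_{y,2} = \mathcal{G}_{y_0,2} + (\mathcal{G}_{y,2}-\mathcal{G}_{y_0,2})$ for a fixed threshold $y_0$: the low-frequency part $\mathcal{G}_{y_0,2}$ is genuinely Lipschitz with a $y$-independent constant (only finitely many primes), while the high-frequency tail has small \emph{second moment} of its sup, allowing a cruder bound (e.g.\ via $\EE\sup_s|\cdot|^2 \ll \EE\int(|\cdot|^2+|\cdot'|^2)$, with the integrals now $O(\sum_{p>y_0}(\log p)^2 p^{-2})$, which is small if $y_0$ is large — but not summable uniformly, so instead one should absorb the tail into the main term by noting it contributes a bounded random Lipschitz perturbation after all, since $\sum_p (\log p)^2 p^{-2} <\infty$). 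In fact $\sum_p (\log p)^2 p^{-2}$ \emph{does} converge, so the derivative $\partial_s \mathcal{G}_{y,2}$ has $L^2$-norm on $[n,n+1]$ with expectation bounded \emph{uniformly in $y$}; thus the honest Sobolev argument already works directly, and step (1) reduces to: $\EE \sup_{[n,n+1]} e^{\lambda \mathcal{G}_{y,2}^\Re} \ll \sum_{k=0}^{N-1} \EE\big[ e^{\lambda \mathcal{G}_{y,2}^\Re(n+k/N)} e^{|\lambda|\,\omega_k}\big]$ where $\omega_k := \sup_{|s-(n+k/N)|\le 1/N}|\mathcal{G}_{y,2}(s)-\mathcal{G}_{y,2}(n+k/N)| \le \big(\tfrac1N\int_{n}^{n+1}|\partial_s\mathcal{G}_{y,2}|^2\big)^{1/2}$, and then Cauchy--Schwarz plus the pointwise bound from step (2) and an exponential moment bound for $\omega_k$ (again of Gaussian-chaos type, using boundedness of each derivative term). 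Choosing $N$ a fixed large constant closes the estimate with a bound independent of $y$, $n$ and $\sigma\ge 1/2$, and depending only on $\lambda$, as required.
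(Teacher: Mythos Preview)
Your pointwise bound (step (2)) is exactly what the paper does. For the supremum, however, the paper takes a different and somewhat cleaner route: it uses a trigonometric identity to show $\mathcal{G}_{y,2}^\Re(s)-\mathcal{G}_{y,2}^\Re(t)\overset{d}{=}\sum_{p\le y}2p^{-2\sigma}\sin\theta_p\sin(\tfrac{s-t}{2})$, applies Hoeffding to get sub-Gaussian increments at scale $|s-t|$ (since $\sum_p p^{-4\sigma}<\infty$), and then invokes generic chaining with Dudley's entropy bound to deduce that $\sup_{s,t\in[n,n+1]}|\mathcal{G}_{y,2}^\Re(s)-\mathcal{G}_{y,2}^\Re(t)|$ itself has uniformly sub-Gaussian tails; combining with the pointwise bound via H\"older finishes. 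Your Sobolev--derivative route is a genuine alternative that avoids chaining: the bound $\omega_k\le N^{-1/2}\|\partial_s\mathcal{G}_{y,2}^\Re\|_{L^2[n,n+1]}$ is correct, and the required exponential moment of $Z:=\|\partial_s\mathcal{G}_{y,2}^\Re\|_{L^2}$ does hold uniformly in $y$ --- for instance by McDiarmid, since changing one $\theta_p$ moves $Z$ by at most $4\log p\,p^{-2\sigma}$ and $\sum_p(\log p)^2 p^{-4\sigma}\le\sum_p(\log p)^2 p^{-2}<\infty$, so $Z$ is sub-Gaussian about its bounded mean. That is what your phrase ``Gaussian-chaos type, using boundedness of each derivative term'' points toward, but you should name the tool (bounded differences or Hanson--Wright) rather than leave it implicit; note also that once $Z$ is sub-Gaussian, any fixed $N$ (even $N=1$) already works, so there is no need to send $N$ large. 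The paper's chaining approach is the more standard packaging for a random-field sup bound and needs no separate control of the derivative; your approach is more elementary in that it replaces chaining by a one-line Sobolev inequality plus a scalar concentration step, at the cost of having to justify that extra step.
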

\begin{proof}
Let us write $\alpha(p) = e^{i \theta_p}$ where $\theta_p \overset{i.i.d.}{\sim} \mathrm{Uniform}([0,2\pi])$. Using the trigonometric identity $\cos x - \cos y = -2 \sin (\frac{x+y}{2}) \sin(\frac{x-y}{2})$, we have
\begin{align*}
\mathcal{G}_{y, 2}^\Re(s) - \mathcal{G}_{y, 2}^\Re(t) 
& = \sum_{p \le y} p^{-2 \sigma} \left[\cos(2\theta_p - 2s\log p) - \cos(2\theta_p - 2t\log p)\right]\\
& = -\sum_{p \le y} \frac{2}{p^{2 \sigma}}\sin\left(2\theta_p - (s+t)\log p\right) \sin\left((t-s)\log p\right)
\overset{d}{=}  \sum_{p \le y} \frac{2}{p^{2 \sigma}}\sin(2\theta_p) \sin\left((s-t)\log p\right).
\end{align*}
Since $|\frac{2}{p^{2 \sigma}}\sin(2\theta_p) \sin\left((s-t)\log p\right)|\le 2p^{-2\sigma}|s-t| \log p$ and $\sum_p p^{-2} \log^2 p \le \int_1^\infty x^{-2} \log^2 x \diff{x} = 2$, we obtain by Hoeffding's inequality \eqref{eq:Hoeffding2} in \Cref{thm:Hoeffding} that
\begin{equation}\label{eq:G3-Hoeffding}
\PP\left(|\mathcal{G}_{y, 2}^\Re(s) - \mathcal{G}_{y, 2}^\Re(t)| \ge u\right)
\le 2 \exp \left(-\frac{u^2}{2\sum_{p \le y}(2p^{-2 \sigma}|s-t|\log p)^2}\right)
\le 2\exp \left(-\frac{u^2}{16|s-t|^2}\right)
\end{equation}
for all $u \ge 0$ and $s, t \in \RR$.
Comparing \eqref{eq:G3-Hoeffding} to \eqref{eq:chaining-assumption},
let $T \subset \RR$ be any bounded interval of length $|T|$ and define $d(s, t) := 2\sqrt{2}|s-t|$. Following the notations in \Cref{thm:chaining},  the cover number with respect to the metric $d$  satisfies $N(T, d, r) \le 1+ \lfloor 2\sqrt{2}|T| /r \rfloor$ for any $r > 0$, and
\[   \gamma_2(T, d) 
    \ll \int_0^\infty \sqrt{\log \left(1+ \lfloor 2\sqrt{2}|T| /r \rfloor\right)}\diff{r}
    = 2\sqrt{2}|T| \int_0^{1} \sqrt{\log \left(1+ \lfloor u^{-1} \rfloor\right)}\diff{u} \ll |T| \]
by Dudley's entropy bound \eqref{eq:Dudley}. Thus it follows from \Cref{thm:chaining} (with $X_t := \mathcal{G}_{y, 2}^\Re(t)$) that
\[   \PP\left(\sup_{s, t \in T}|\mathcal{G}_{y, 2}^\Re(s) - \mathcal{G}_{y, 2}^\Re(t)| \ge u\right) \le 
    C \exp\left(-\frac{u^2}{C|T|^2}\right) \qquad \forall u \ge 0 \]
for some constant $C>0$ uniformly in $|T| > 0$, $y \ge 3$ and $\sigma \ge \frac{1}{2}$, from which we deduce
\begin{align}
\notag
    &\sup_{y \ge 3, \sigma \ge \frac{1}{2},  |T| \le K } \EE\left[ \sup_{s, t \in T} e^{\lambda\left[\mathcal{G}_{y, 2}^\Re(s; \sigma) - \mathcal{G}_{y, 2}^\Re(t; \sigma)\right]}\right] \\
    \label{eq:expm-G2}
    &\le \sup_{y \ge 3, \sigma \ge \frac{1}{2},  |T| \le K } \left[\sum_{j \ge 1} e^{|\lambda| j} \PP\left(\sup_{s, t \in T}|\mathcal{G}_{y, 2}^\Re(s) - \mathcal{G}_{y, 2}^\Re(t)| \in [j-1, j] \right) \right]
    \le C\sum_{j \ge 1} e^{|\lambda| j} e^{-\frac{(j-1)^2}{CK^2}} 
    < \infty.
\end{align}
On the other hand, since
\[   \mathcal{G}_{y, 2}^{\Re}(t) \overset{d}{=} \sum_{p \le y} p^{-2\sigma} \cos(2\theta_p) \qquad \text{and} \qquad
    \sum_{p \le y} p^{-4\sigma} \le  \sum_{p \le y} p^{-2} \le \frac{1}{2}, \]
another application of Hoeffding's inequality (this time using \eqref{eq:Hoeffding1} in \Cref{thm:Hoeffding}) shows that 
\begin{equation}\label{eq:expm-G2-2}
\sup_{y \ge 3, \sigma \ge \frac{1}{2}, t \in \RR} \EE\left[ e^{\lambda \mathcal{G}_{y, 2}^{\Re}(t)}\right]
\le \sup_{y \ge 3, \sigma \ge \frac{1}{2}} \exp \left( \frac{\lambda^2}{2} \sum_{p \le y} p^{-4\sigma}\right)
\le e^{\lambda^2 / 4} \qquad \forall \lambda \in \RR.
\end{equation}
To conclude our proof, note that
\begin{align*}
    &\sup_{n \in \ZZ, y \ge 3, \sigma \ge \frac{1}{2}} \EE\left[ \sup_{s \in [n, n+1]} e^{\lambda \mathcal{G}_{y, 2}^\Re(s; \sigma)} \right]
    =
    \sup_{n \in \ZZ, y \ge 3, \sigma \ge \frac{1}{2}} \sup_{t \in[n, n+1]} \EE\left[ \left(\sup_{s \in [n, n+1]} e^{\lambda \left[\mathcal{G}_{y, 2}^\Re(s; \sigma) - \mathcal{G}_{y, 2}^\Re(t; \sigma)\right]} \right)
    e^{\lambda \mathcal{G}_{y, 2}^\Re(t; \sigma)}
    \right]
    \\
    & \qquad \le \left(\sup_{n \in \ZZ, y \ge 3, \sigma \ge \frac{1}{2}} \sup_{t \in[n, n+1]}\EE\left[\left(\sup_{s \in [n, n+1]} e^{2\lambda \left[\mathcal{G}_{y, 2}^\Re(s; \sigma) - \mathcal{G}_{y, 2}^\Re(t; \sigma)\right]} \right) \right] \right)^{\frac{1}{2}}
    \left(\sup_{y \ge 3, \sigma \ge \frac{1}{2}, t \in \RR} \EE\left[ e^{2\lambda \mathcal{G}_{y, 2}^\Re(t; \sigma)} \right]\right)^{\frac{1}{2}}
\end{align*}
by Cauchy--Schwarz, and the desired result immediately follows from the two estimates \eqref{eq:expm-G2} and \eqref{eq:expm-G2-2}.
\end{proof}

\subsection{Ingredients from multiplicative chaos theory}\label{subsec:MCingredient}
In this subsection we shall always assume $\sigma = \frac{1}{2}$. Recall $\mathcal{G}^{\Re}_{y, 1}(t)=\Re \sum_{p \le y} \frac{\alpha(p)}{p^{1/2 +it}}$. We aim to show that:
\begin{lem}\label{lem:momentgoal}
For any $q \in [0, 1)$, we have
\[    \sup_{y \ge 3} \EE\left[\left(\frac{\sqrt{\log\log y}}{\log y}\int_{0}^{1} \exp\left(2 \mathcal{G}_{y, 1}^\Re(t) \right) \diff{t} \right)^{q} \right] < \infty. \]
\end{lem}
To establish this claim, we now recall a coupling result from \cite{SaksmanWebb,SW}.
\begin{thm}[{cf.~\cite[Theorem 7 and Lemma 17]{SaksmanWebb} and ~\cite[Theorem 1.7]{SW}}]\label{thm:SW-GMC}
On some suitable probability space one can construct i.i.d.~random variables $\left(\alpha(p)\right)_{p}$ with the uniform distribution on $S^1$ and a collection of (real-valued) random fields $\widetilde{\mathcal{G}}_{y, 1}^\Re$ and $E_y$ on $[0, 1]$ such that
\[     \mathcal{G}_{y, 1}^\Re(\cdot) = \widetilde{\mathcal{G}}_{y, 1}^\Re (\cdot) + E_y(\cdot) \]
simultaneously for all $y \ge 3$ almost surely, where
\begin{itemize}
\item $E_y$ is a sequence of continuous fields which converges uniformly almost surely as $y \to \infty$ and satisfies
\[   \EE\left[\sup_{y \ge 3}\sup_{t \in [0, 1]} e^{\lambda E_y(t)}\right] < \infty \qquad \forall \lambda \in \RR; \]
\item $\widetilde{\mathcal{G}}_{y, 1}^\Re$ is a sequence of continuous Gaussian fields with the property that
\begin{equation}\label{eq:G1-cov-approx}
    \sup_{s, t \in [0, 1], \, y \ge 3} \left|\EE\left[\widetilde{\mathcal{G}}_{y, 1}^\Re(s) \widetilde{\mathcal{G}}_{y, 1}^\Re(t) \right] - \frac{1}{2}\log \left(\frac{1}{|s-t|} \wedge \log y \right) \right| <\infty.
\end{equation}
\end{itemize}
\end{thm}
We also recall a fact about existence of moments of critical Gaussian multiplicative chaos.
\begin{lem}\label{lem:cGMCmoments}
Let $G_T(\cdot)$ be a collection of (real-valued) continuous Gaussian fields on $[0, 1]$ with 
\begin{equation}\label{eq:GMC-mom-cond}
    \sup_{x, y \in [0, 1],\, T > 0} \left|\EE\left[G_T(x) G_T(y) \right] - \log \left(\frac{1}{|x-y|}\right) \wedge  T \right| <\infty.
\end{equation}
Then for any $q \in (0, 1)$, we have
\begin{equation}\label{eq:GMC-mom-criterion}
    \sup_{T > 0} \EE\left[\left(\sqrt{T}\int_{0}^{1} e^{\sqrt{2}G_T(x) - \EE[G_T(x)^2]} \diff{x} \right)^{q} \right] < \infty.
\end{equation}
\end{lem}
\begin{proof}[Sketch of proof]
This claim was established as \cite[Corollary 6]{DKRV2014} when $G_T$ is the white-noise decomposition of $\ast$-scale invariant fields (which ultimately follows from earlier results on multiplicative cascades).
In the general case, let us recall a consequence of Kahane's convexity inequality (see e.g., \cite[Lemma 16]{DKRV2014}): if $Y(\cdot)$ and $Z(\cdot)$ are two continuous Gaussian fields satisfying $\EE[Y(s)Y(t)] \le \EE[Z(s)Z(t)]$ for all $s, t \in [0,1]$, then
\begin{equation}\label{eq:Kahane-conv}
    \EE\left[ \left(\int_0^1 e^{Y(x) - \frac{1}{2} \EE[Y(x)^2]}\diff{x}\right)^q\right]
    \ge \EE\left[ \left(\int_0^1 e^{Z(x) - \frac{1}{2} \EE[Z(x)^2]}\diff{x}\right)^q\right] \qquad \forall q \in (0,1).
\end{equation}
Suppose $G_T(\cdot)$ is the white-noise decomposition of some $\ast$-scale invariant field on $[0,1]$, and $\widetilde{G}_T(\cdot)$ is another collection of continuous Gaussian fields satisfying \eqref{eq:GMC-mom-cond}. Since both collections of Gaussian fields satisfy \eqref{eq:GMC-mom-cond}, there necessarily exists some constant $C > 0$ such that
\[   \EE[G_T(s) G_T(t)] \le \EE[\widetilde{G}_T(s) \widetilde{G}_T(t)] + C \qquad \forall s, t \in [0, 1]. \]
Let us define an independent Gaussian random variable $\mathcal{N}$ with mean $0$ and variance $C$, and set $Y(x) := \sqrt{2}G_T(x)$ as well as $Z(x) := \sqrt{2}\left[\widetilde{G}_T(x) + \mathcal{N}\right]$. Then
\[   \EE[Y(s) Y(t)] = 2 \EE[G_T(s) G_T(t)] \le 2 \left\{\EE[\widetilde{G}_T(s) \widetilde{G}_T(t)] + C \right\} = \EE[Z(s) Z(t)] \qquad \forall s, t \in [0,1], \]
and from \eqref{eq:Kahane-conv} we deduce
\begin{align*}
    \EE\left[\left(\int_{0}^{1} e^{\sqrt{2}G_T(x) - \EE[G_T(x)^2]} \diff{x} \right)^{q} \right]
    &\ge \EE\left[\left(\int_{0}^{1} e^{\sqrt{2}\left(\widetilde{G}_T(x)+\mathcal{N}\right) - \EE[\left(\widetilde{G}_T(x)+\mathcal{N}\right)^2]} \diff{x} \right)^{q} \right]\\
    & =\EE\left[\left(e^{\sqrt{2}q \mathcal{N} - q \EE[\mathcal{N}^2]}\right)\right]\EE\left[\left(\int_{0}^{1} e^{\sqrt{2}\widetilde{G}_T(x) - \EE[\widetilde{G}_T(x)^2]} \diff{x} \right)^{q} \right]
\end{align*}
where the equality follows from independence. Using $\EE\left[e^{\sqrt{2}q \mathcal{N}}\right] = e^{q^2 \EE[\mathcal{N}^2]}$, we see that
\[    \sup_{T > 0} \EE\left[\left(\sqrt{T}\int_{0}^{1} e^{\sqrt{2}\widetilde{G}_T(x) - \EE[\widetilde{G}_T(x)^2]} \diff{x} \right)^{q} \right]
    \le e^{q(1-q)C}\sup_{T > 0}\EE\left[\left(\sqrt{T}\int_{0}^{1} e^{\sqrt{2}G_T(x) - \EE[G_T(x)^2]} \diff{x} \right)^{q} \right] \]
and the bound \eqref{eq:GMC-mom-criterion} for $G_T$ implies an analogous bound for $\widetilde{G}_T$, as claimed.
\end{proof}

\begin{proof}[Proof of \Cref{lem:momentgoal}]
Let $q < q' < 1$. Using H\"older's inequality, we obtain
\begin{align*}
& \sup_{y \ge 3} \EE\left[\left(\frac{\sqrt{\log\log y}}{\log y}\int_{0}^{1} \exp\left(2 \mathcal{G}_{y, 1}^\Re(t) \right) \diff{t} \right)^{q} \right]\\
& \qquad \le \left(\sup_{y \ge 3} \EE\left[\sup_{t \in [0, 1]} e^{(1 - q/q')^{-1}E_y(t)} \right]^{1-q/q'}\right) \sup_{y \ge 3} \EE\left[\left(\frac{\sqrt{\log\log y}}{\log y}\int_{0}^{1} \exp\left(2 \widetilde{\mathcal{G}}_{y, 1}^\Re(t) \right) \diff{t} \right)^{q'} \right]^{q/q'}\\
& \qquad \ll_{q, q'}  \sup_{y \ge 3} \EE\left[\left(\sqrt{\log\log y}\int_{0}^{1} \exp\left(2 \widetilde{\mathcal{G}}_{y, 1}^\Re(t) - 2 \EE[\widetilde{\mathcal{G}}_{y, 1}^\Re(t)^2]\right) \diff{t} \right)^{q'} \right]^{q/q'}
\end{align*}
by \Cref{thm:SW-GMC}, where in the last inequality we used $2\EE[\widetilde{\mathcal{G}}_{y, 1}^\Re(t)^2] = \log \log y + O(1)$ by \eqref{eq:G1-cov-approx}. The claim now follows from \Cref{lem:cGMCmoments} with $T = \log\log y$ and $G_T(t) := \sqrt{2} \widetilde{\mathcal{G}}_{y, 1}^\Re(t)$.
\end{proof}

\begin{proof}[Proof of \Cref{lem:upper2}]
Let  $\max(1, 2q) < r < r' < 2$. We have
\begin{align}
\notag
\EE\left[\left(\frac{1}{\log y}\int_{\RR} \left| \frac{A_y\left(1/2 + it\right)}{1/2 + it}\right|^2 \diff{t} \right)^{q} \right]
& \le \EE\left[\left(\frac{1}{\log y}\int_{\RR} \left| \frac{A_y\left(1/2 + it\right)}{1/2 + it}\right|^2 \diff{t} \right)^{r/2} \right]^{2q/r}\\
\label{eq:subsplit}
&\le \left\{
\sum_{n \in \ZZ} \frac{8}{(1+n^2)^{r/2}} \EE\left[\left(\frac{1}{\log y}\int_{n}^{n+1} \left| A_y\left(1/2 + it\right)\right|^2 \diff{t} \right)^{r/2} \right]
\right\}^{2q/r}
\end{align}
by H\"older's inequality and then the subadditivity of $a \mapsto a^{r/2}$. Since the law of $\alpha(p)$ is rotationally invariant, we have $\left(A_y\left(1/2 + i(t+n)\right), t \in [0, 1]\right) \overset{d}{=} \left(A_y\left(1/2 + it\right), t \in [0, 1]\right)$. In particular, \eqref{eq:subsplit} is equal to
\begin{equation}\label{eq:subsplit2}
     \left(\sum_{n \in \ZZ} \frac{8}{(1+n^2)^{r/2}} \right)^{2q/r} \EE\left[\left(\frac{1}{\log y}\int_{0}^{1} \left| A_y\left(1/2 + it\right)\right|^2 \diff{t} \right)^{r/2} \right]^{2q/r}.
\end{equation}
Using \eqref{eq:def_Ay}, the absolute (deterministic) bound for $|\mathcal{G}_{y, 3}(\cdot)|$ as well as H\"older's inequality, we see that
\begin{align*}
& \EE\left[\left(\frac{1}{\log y}\int_{0}^{1} \left| A_y\left(1/2 + it\right)\right|^2 \diff{t} \right)^{r/2} \right]^{2/r}\\
& \qquad \le \EE\left[\left(\sup_{s \in [0,1]} e^{2G_{y, 3}^{\Re}(s)}\right)^{r/2}\left(\sup_{s \in [0,1]} e^{G_{y, 2}^{\Re}(s)}\right)^{r/2}\left(\frac{1}{\log y}\int_{0}^{1} \exp\left(2 \mathcal{G}_{y, 1}^\Re(t) \right) \diff{t} \right)^{r/2} \right]^{2/r}\\
&\qquad \ll \left(\sup_{y\ge 3} \EE\left[\sup_{s \in [0, 1]} e^{ \frac{r}{2}(1 - \frac{r}{r'})^{-1} \mathcal{G}_{y,2}^\Re(s)}\right]^{\frac{2}{r}- \frac{2}{r'}} \right)
\left(\EE\left[\left(\frac{1}{\log y}\int_{0}^{1} \exp\left(2 \mathcal{G}_{y, 1}^\Re(t) \right) \diff{t} \right)^{\frac{r'}{2}} \right]^{\frac{2}{r'}}\right).
\end{align*}
The first factor on the right-hand side is $\ll_{r, r'} 1$ by \Cref{lem:G2-expmom}, and whereas the second factor is $\ll (\log \log y)^{-1/2}$ by \Cref{lem:momentgoal} with the implicit constant being uniform for $r'$ bounded away from $2$ (as a consequence of H\"older's inequality). Substituting this back to \eqref{eq:subsplit2}, we conclude the proof with an upper bound of order $(\log \log y)^{-q/2}$ and the desired uniformity in $q \in [0, 1-\delta]$.
\end{proof}
\section*{Acknowledgments}
O.G.~has received funding from the European Research Council (ERC) under the European Union's Horizon 2020 research and innovation programme (grant agreement no.~851318). We thank Adam Harper and Brad Rodgers for comments on earlier versions. We express gratitude to the referee for a careful reading of the manuscript and helpful suggestions and corrections. 
\appendix
\section{Probability results}
\begin{thm}[Hoeffding {\cite{Hoeffding}}]\label{thm:Hoeffding}
Let $(X_i)_{i \le n}$ be a collection of independent random variables with $\EE[X_i] = 0$ and $|X_i| \le c_i$ for each $i \le n$. Then $S_n := \sum_{i=1}^n X_i$ satisfies
\begin{align}
\label{eq:Hoeffding1}
\EE\left[ e^{\lambda S_n} \right]
&\le \exp\left(\frac{\lambda^2}{2} \sum_{i=1}^n c_i^2\right) \quad \forall \lambda \in \RR\\
\label{eq:Hoeffding2}
\text{and} \quad
\PP\left(|S_n| \ge u\right)  &\le 2 \exp \left(-\frac{u^2}{2\sum_{i=1}^n c_i^2}\right) \quad \forall u \ge 0.
\end{align}
\end{thm}
\begin{thm}[Generic chaining bound, {cf.~\cite[Equation (2.47)]{chaining}}] \label{thm:chaining}
Let $(X_t)_{t \in T}$ be a collection of zero-mean random variables indexed by elements of a metric space $(T, d)$ satisfying
\begin{equation}\label{eq:chaining-assumption}
    \PP\left(|X_s - X_t| \ge u\right) \le 2 \exp\left(-\frac{u^2}{2d(s, t)^2}\right) \qquad \forall s, t \in T, \quad u \ge 0.
\end{equation}
Then there exists some absolute constant $C_1 > 0$ such that
\[ \PP\left(\sup_{s, t \in T} |X_s - X_t| \ge u\right)
\le C_1 \exp\left(-\frac{u^2}{C_1\gamma_2(T, d)^2}\right) \qquad \forall u \ge 0. \]
The special constant $\gamma_2(T, d)$ can be estimated from above by Dudley's entropy bound: there exists some absolute constant $C_2 > 0$ independent of $(T, d)$ such that
\begin{equation}\label{eq:Dudley}
    \gamma_2(T, d) \le C_2 \int_0^\infty  \sqrt{\log N(T, d, r)} \diff{r}
\end{equation}
where $N(T, d, r)$ denotes the smallest number of balls of radius $r$ (with respect to $d$) needed to cover $T$.
\end{thm}

\bibliographystyle{abbrv}
\bibliography{main}

\Addresses

\end{document}